\documentclass{amsart}%
\usepackage{amsmath}
\usepackage{amssymb}
\usepackage{amsfonts}
\usepackage{graphicx}%
\setcounter{MaxMatrixCols}{30}
\providecommand{\U}[1]{\protect\rule{.1in}{.1in}}
\newtheorem{theorem}{Theorem}
\theoremstyle{plain}

\newtheorem{lemma}{Lemma}

\begin{document}
\title[A new proof of a theorem of Tietze]{ A new proof of a theorem of Tietze}
\author{Daniel Duverney}
\address{110, rue du chevalier fran\c{c}ais, 59000 Lille, France}
\email{daniel.duverney@orange.fr}
\author{Iekata Shiokawa}
\address{13-43, Fujizuka-cho, Hodogaya-ku, Yokohama 240-0031, Japan}
\email{shiokawa@beige.ocn.ne.jp}
\date{March 5, 2022}
\subjclass{11A55}
\keywords{Tietze Theorem, Convergence of semi-regular continued fractions.}

\begin{abstract}
We give a new proof of Tietze Theorem on the convergence of infinite
semi-regular continued fractions.

\end{abstract}
\maketitle

The purpose of this note is to give a new proof of Theorem \ref{TietzeTh}
below, known as \textit{Tietze Theorem \cite{Per}}. By Theorem \ref{TietzeTh},
any infinite semi-regular continued fraction (\cite{Tietze},\cite{Per}%
,\cite{DS}) is convergent.

\begin{theorem}
\label{TietzeTh}Assume that the infinite continued fraction
\begin{equation}
\alpha:=b_{0}+\frac{a_{1}}{b_{1}}%
\genfrac{}{}{0pt}{}{{}}{+}%
\frac{a_{2}}{b_{2}}%
\genfrac{}{}{0pt}{}{{}}{+\cdots+}%
\frac{a_{n}}{b_{n}}%
\genfrac{}{}{0pt}{}{{}}{+\cdots}
\label{Int1}%
\end{equation}
satisfies the following conditions:%
\begin{equation}
a_{n}\in\left\{  -1,1\right\}  ,\qquad b_{n}\in\left[  1,+\infty\right[
,\qquad b_{n}+a_{n+1}\geq1\qquad\left(  n\geq1\right)  . \label{Int2}%
\end{equation}
Then $\alpha$ is convergent.
\end{theorem}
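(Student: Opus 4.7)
The plan is to reduce convergence of $\alpha$ to absolute convergence of a series over the denominators of the convergents, and then to establish a suitable growth bound. Let $p_n/q_n$ denote the $n$-th convergent of \eqref{Int1}, defined by the standard three-term recurrences
\[
p_n = b_n p_{n-1} + a_n p_{n-2}, \qquad q_n = b_n q_{n-1} + a_n q_{n-2},
\]
with $p_{-1}=1$, $p_0=b_0$, $q_{-1}=0$, $q_0=1$. The determinant identity $p_n q_{n-1} - p_{n-1} q_n = (-1)^{n-1} a_1 \cdots a_n$ yields
\[
\frac{p_n}{q_n} - \frac{p_{n-1}}{q_{n-1}} = \frac{\varepsilon_n}{q_n q_{n-1}}, \qquad \varepsilon_n := (-1)^{n-1} a_1 \cdots a_n \in \{-1,+1\},
\]
so it suffices to prove $\sum_{n \ge 1} 1/(q_n q_{n-1}) < \infty$; since the signs $\varepsilon_n$ are uncontrolled, absolute convergence is the natural target.

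Next I would establish positivity of the $q_n$ via the auxiliary quantity $w_n := q_n + a_{n+1}\, q_{n-1}$. A direct computation with the recurrence yields the two identities
\[
q_{n+1} = (b_{n+1}-1)\,q_n + w_n, \qquad w_{n+1} - w_n = (b_{n+1}+a_{n+2}-1)\, q_n,
\]
in which the two inequalities in \eqref{Int2} appear in parallel: provided $q_n \ge 0$, the first gives $q_{n+1} \ge w_n$, and the second makes $w_n$ non-decreasing. Starting from $w_0 = q_0 = 1$, a joint induction yields $q_n \ge 1$ and $w_n \ge 1$ for every $n \ge 0$. In particular, whenever $a_{n+1} = -1$, the inequality $w_n \ge 1$ reads as the useful gap $q_n \ge q_{n-1}+1$.

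The final step is to analyze the monotone limit $W := \lim_n w_n \in [1,\infty]$ and conclude. If $W < \infty$, then $\sum_n (b_{n+1}+a_{n+2}-1)\, q_n = W-1$ converges; since this summand is $\ge 1$ whenever $a_{n+2} = +1$, we must have $a_n = -1$ for all $n$ sufficiently large, and for such $n$ one has $w_n = q_n - q_{n-1} \to W \ge 1$, giving linear growth $q_n \sim Wn$. If $W = \infty$, combining $q_{n+1} \ge w_n$ with $w_{n+1} \ge w_n + q_n$ (valid whenever $a_{n+2} = +1$) and the constraint $b_{n+1} \ge 2$ (forced whenever $a_{n+2} = -1$) shows that $q_n q_{n-1}$ diverges fast enough to make $\sum 1/(q_n q_{n-1})$ converge, proving that $\alpha$ exists.

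The main obstacle is precisely this last growth analysis in the case $W = \infty$: one must turn the soft assertion $w_n \to \infty$ into a quantitative lower bound on $q_n$ under an arbitrary sign pattern of $(a_n)$, exploiting both parts of \eqref{Int2} in tandem. All other steps---the convergent formula and the positivity via $w_n$---reduce to routine calculations once the auxiliary sequence $w_n = q_n + a_{n+1} q_{n-1}$ has been isolated as the natural packaging of the hypothesis $b_n + a_{n+1} \ge 1$.
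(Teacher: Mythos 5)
Your reduction of the theorem to the absolute convergence of $\sum_{n\geq1}1/(q_{n}q_{n-1})$ is legitimate (given $q_{n}\geq1$, it follows from (\ref{Int8})), and your positivity argument via $w_{n}:=q_{n}+a_{n+1}q_{n-1}$ is correct and is in fact identical to the paper's Lemma \ref{Lem1}: your identities $q_{n+1}=(b_{n+1}-1)q_{n}+w_{n}$ and $w_{n+1}-w_{n}=(b_{n+1}+a_{n+2}-1)q_{n}$ are exactly (\ref{A1})--(\ref{A2}). The case $W:=\lim_{n}w_{n}<\infty$ also works as you describe. The genuine gap is the case $W=\infty$, which you explicitly defer: as written there is no argument, only the assertion that $q_{n}q_{n-1}$ ``diverges fast enough.'' The facts you propose to combine do not suffice on their own. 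From $q_{n+1}\geq w_{n}$ and monotonicity of $w$ you get $q_{n}q_{n-1}\geq w_{n-2}^{2}$, but a monotone sequence $w_{n}\to\infty$ can perfectly well satisfy $\sum_{n}1/w_{n}^{2}=\infty$; and the inequality $w_{n+1}\geq w_{n}+q_{n}$ is only available at the sparse set of indices with $a_{n+2}=1$. What is actually needed is an interleaving of two mechanisms: (i) along a maximal run of consecutive signs $a_{j}=-1$ one has $q_{j}-q_{j-1}=w_{j-1}\geq w$, where $w$ is the value of the monotone sequence on entering the run, so the run contributes at most $\sum_{i\geq1}1/\left(  (w+i)(w+i-1)\right)  =1/w$ to the series; and (ii) each occurrence of $a_{n+2}=1$ gives $w_{n+1}\geq w_{n}+q_{n}\geq w_{n}+w_{n-1}$, so $w$ essentially doubles between consecutive runs, making the contributions $1/w$ of successive runs geometrically summable. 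Without carrying out some such argument, the proof is not complete; your own closing paragraph concedes that this is ``the main obstacle.''

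It is worth noting that the paper avoids this quantitative estimate entirely, and aims at something strictly weaker than what you are trying to prove. It establishes only the soft statement $\lim_{n}q_{n}=\infty$ (Lemma \ref{LemInf}, by a compactness/minimality argument on bounded subsequences), and then controls $\left\vert p_{n+k}/q_{n+k}-p_{n}/q_{n}\right\vert$ not term by term through the series but through the tail values $x_{n,k}$ of the continued fraction: Lemma \ref{Lem-x-n,k} shows $\left\vert x_{n,k}\right\vert \leq1$ with the sign of $a_{n+1}$, and anchoring at the largest $m<n$ with $a_{m+1}=1$ makes the denominator $q_{m}+x_{m,\cdot}q_{m-1}$ at least $q_{m}$, giving the Cauchy bound $2/q_{m(n)}^{2}$. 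So the two routes genuinely differ: yours, if completed, would yield the stronger conclusion that the series (\ref{Int8}) converges absolutely, but at the price of the delicate growth analysis sketched above; the paper's route buys convergence more cheaply by exploiting the sign and size of the tails rather than the size of the denominators.
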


Tietze original proof of Theorem \ref{TietzeTh} (1911) rested on a geometrical
representation of the convergents $p_{n}/q_{n}$ as points $(p_{n},q_{n})$ in
the plane \cite{Tietze}. Shortly after (1913), Oskar Perron gave a different
proof by using continuants \cite{Per}. Almost 100 years later, Keith Matthews
challenged Alan Offer to find a proof of Theorem \ref{TietzeTh} avoiding the
use of continuants. In \cite{Offer}, Offer succeeded in doing so in the proof
of Lemma \ref{LemInf} below, but not in the rest of the proof. In this note we
develop Offer's ideas and give a new simple proof of Tietze Theorem without
using at all continuants.\medskip

Let $\alpha$ be the continued fraction defined by (\ref{Int1}). As usual, we
define for $n\geq1$%
\begin{equation}
\left\{
\begin{array}
[c]{lll}%
p_{-1}=1, & p_{0}=b_{0}, & p_{n}=b_{n}p_{n-1}+a_{n}p_{n-2},\\
q_{-1}=0, & q_{0}=1, & q_{n}=b_{n}q_{n-1}+a_{n}q_{n-2}.
\end{array}
\right.  \label{Int5}%
\end{equation}
It is well known that%
\begin{equation}
b_{0}+\frac{a_{1}}{b_{1}}%
\genfrac{}{}{0pt}{}{{}}{+}%
\frac{a_{2}}{b_{2}}%
\genfrac{}{}{0pt}{}{{}}{+\cdots+}%
\frac{a_{n}}{b_{n}}=\frac{p_{n}}{q_{n}}\qquad\left(  n\geq1\right)  ,
\label{Int6}%
\end{equation}
and an easy induction using (\ref{Int5}) shows that%
\begin{equation}
p_{n}q_{n-1}-p_{n-1}q_{n}=\left(  -1\right)  ^{n-1}a_{1}a_{2}\cdots
a_{n}\qquad\left(  n\geq1\right)  , \label{Int7}%
\end{equation}
which yields immediately%
\begin{equation}
\frac{p_{n}}{q_{n}}=b_{0}+\sum_{k=1}^{n}\frac{\left(  -1\right)  ^{k-1}%
a_{1}a_{2}\cdots a_{k}}{q_{k-1}q_{k}}. \label{Int8}%
\end{equation}

First we prove Theorem \ref{TietzeTh} in the following very special case,
connected to negative continued fractions \cite{DS}.

\begin{lemma}
\label{LemNCF}Assume that $a_{n}=-1$ for all $n\geq1.$ Then the continued
fraction (\ref{Int1}) is well defined and convergent.
\end{lemma}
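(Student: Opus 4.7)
The plan is to show first that $q_n>0$ for all $n$ (so the convergents $p_n/q_n$ are well defined), and then to use formula (\ref{Int8}) to reduce convergence of the continued fraction to convergence of an explicit series of positive terms that can be bounded by a telescoping series.

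First I would exploit the hypothesis. Since $a_n=-1$ for every $n\geq 1$, the inequality $b_n+a_{n+1}\geq 1$ in (\ref{Int2}) becomes $b_n\geq 2$ for every $n\geq 1$. The recurrence (\ref{Int5}) then specializes to $q_n=b_nq_{n-1}-q_{n-2}$ with $q_{-1}=0$, $q_0=1$. I would then prove by induction on $n\geq 1$ the two-part assertion
\[
q_n-q_{n-1}\ \geq\ q_{n-1}-q_{n-2}\ \geq\ 1.
\]
Indeed $q_1-q_0=b_1-1\geq 1$, and assuming the inequality at step $n$, we have
\[
q_{n+1}-q_n=(b_{n+1}-1)q_n-q_{n-1}\ \geq\ q_n-q_{n-1},
\]
because $b_{n+1}-1\geq 1$ and $q_n\geq q_{n-1}$. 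This simultaneously shows that $q_n>0$ for every $n\geq 0$ (so the continued fraction is well defined) and that $q_n\to +\infty$, since $q_n\geq n+1$.

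Next I would turn to convergence. With $a_k=-1$ we have $a_1a_2\cdots a_k=(-1)^k$, so $(-1)^{k-1}a_1\cdots a_k=-1$, and (\ref{Int8}) reads
\[
\frac{p_n}{q_n}=b_0-\sum_{k=1}^{n}\frac{1}{q_{k-1}q_k}.
\]
All terms of this series are positive, so it suffices to show that the partial sums are bounded. Using $q_k-q_{k-1}\geq 1$ proved above, I get the pointwise estimate
\[
\frac{1}{q_{k-1}q_k}\ \leq\ \frac{q_k-q_{k-1}}{q_{k-1}q_k}=\frac{1}{q_{k-1}}-\frac{1}{q_k},
\]
which telescopes to $\sum_{k=1}^{n}\frac{1}{q_{k-1}q_k}\leq 1-\frac{1}{q_n}\leq 1$. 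Hence the series converges, and so does $p_n/q_n$.

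I do not anticipate any real obstacle: the whole argument is driven by the single observation $b_n\geq 2$, which both forces the denominators $q_n$ to grow (giving well-definedness and a diverging sequence) and makes the series in (\ref{Int8}) alternating-free and dominated by a telescoping series. The only point that deserves care is the induction establishing the monotonicity of the differences $q_n-q_{n-1}$, as this single estimate does triple duty: positivity of $q_n$, divergence of $q_n$, and the telescoping bound on $1/(q_{k-1}q_k)$.
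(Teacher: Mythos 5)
Your proof is correct and follows essentially the same route as the paper: both rest on the key chain $q_{n+1}-q_{n}=(b_{n+1}-1)q_{n}-q_{n-1}\geq q_{n}-q_{n-1}\geq\cdots\geq q_{0}-q_{-1}=1$, hence $q_{n}\geq n+1$, and then conclude via (\ref{Int8}). The only (cosmetic) difference is in the last step, where the paper deduces convergence of the series in (\ref{Int8}) directly from $q_{k-1}q_{k}\geq k(k+1)$, while you bound the (single-signed) series by the telescoping sum $\sum(1/q_{k-1}-1/q_{k})$.
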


\begin{proof}
We follow \cite{Cahen}. By (\ref{Int5}) and (\ref{Int2}) we have for $n\geq0$%
\[
q_{n+1}-q_{n}=\left(  b_{n+1}-1\right)  q_{n}-q_{n-1}\geq q_{n}-q_{n-1}%
\geq\ldots\geq q_{0}-q_{-1}=1.
\]
Hence $q_{n}\geq n+1$ for all $n\geq0,$ which proves that the continued
fraction (\ref{Int1}) is well defined and convergent by (\ref{Int8}).
\end{proof}

We now return to the general case. Since $q_{n+1}<q_{n}$ when $\left(
a_{n+1},b_{n+1}\right)  =\left(  -1,1\right)  ,$ the above proof cannot be used.

\begin{lemma}
\label{Lem1}Let $q_{n}$ be defined by (\ref{Int5}). We have for $n\geq0$%
\begin{equation}
q_{n}\geq1,\quad q_{n}+a_{n+1}q_{n-1}\geq1. \label{Offer}%
\end{equation}

\end{lemma}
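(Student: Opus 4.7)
The plan is to prove the two inequalities simultaneously by induction on $n$. The base case $n=0$ is immediate: $q_0=1\ge 1$, and $q_0+a_1q_{-1}=1+a_1\cdot 0=1\ge 1$. For the inductive step, I would exploit the algebraic identity
\[
q_{n+1}=b_{n+1}q_n+a_{n+1}q_{n-1}=(b_{n+1}-1)q_n+\bigl(q_n+a_{n+1}q_{n-1}\bigr),
\]
which cleanly separates the recursion into a non-negative piece (thanks to $b_{n+1}\ge 1$ and the inductive $q_n\ge 1$) and a piece that is $\ge 1$ by the second part of the inductive hypothesis. This immediately gives $q_{n+1}\ge 1$.

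For the second inequality $q_{n+1}+a_{n+2}q_n\ge 1$, I would split on the sign of $a_{n+2}$. The case $a_{n+2}=1$ is trivial, since then the left-hand side equals $q_{n+1}+q_n\ge 2$. The delicate case is $a_{n+2}=-1$, where I need $q_{n+1}-q_n\ge 1$. Here the hypothesis $b_{n+1}+a_{n+2}\ge 1$ of (\ref{Int2}) forces $b_{n+1}\ge 2$, so $(b_{n+1}-1)q_n\ge q_n$ by the inductive bound $q_n\ge 1$, and the same decomposition yields
\[
q_{n+1}-q_n=(b_{n+1}-1)q_n+a_{n+1}q_{n-1}\ge q_n+a_{n+1}q_{n-1}\ge 1,
\]
again by the inductive hypothesis.

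The main obstacle is precisely the case $a_{n+2}=-1$: in this situation $q_{n+1}$ may be smaller than $q_n$ if $b_{n+1}$ is close to $1$, and without the semi-regularity condition $b_{n+1}+a_{n+2}\ge 1$ the statement would fail. The role of this condition is exactly to guarantee $b_{n+1}\ge 2$ whenever a negative partial numerator is about to appear, which is what drives the estimate above. Once both inequalities are established for $n+1$, the induction closes.
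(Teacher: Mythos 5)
Your proof is correct and follows essentially the same route as the paper: a simultaneous induction on the two inequalities built on the identity $q_{n+1}+a_{n+2}q_{n}=(b_{n+1}+a_{n+2})q_{n}+a_{n+1}q_{n-1}$. The only cosmetic difference is that you split into cases according to the sign of $a_{n+2}$, whereas the paper absorbs both cases at once by using the hypothesis $b_{n+1}+a_{n+2}\geq1$ directly to get $(b_{n+1}+a_{n+2})q_{n}\geq q_{n}$.
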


\begin{proof}
By following Offer \cite[Lemma 1]{Offer}, we first observe that
\begin{equation}
q_{n+1}+a_{n+2}q_{n}=\left(  b_{n+1}+a_{n+2}\right)  q_{n}+a_{n+1}q_{n-1}\geq
q_{n}+a_{n+1}q_{n-1}\quad\left(  n\geq0\right)  . \label{A1}%
\end{equation}
Hence by induction $q_{n}+a_{n+1}q_{n-1}\geq q_{0}+a_{1}q_{-1}=1$ for all
$n\geq0,$ which yields
\begin{equation}
q_{n+1}=b_{n+1}q_{n}+a_{n+1}q_{n-1}\geq q_{n}+a_{n+1}q_{n-1}\geq1\quad\left(
n\geq0\right)  . \label{A2}%
\end{equation}
Therefore (\ref{Offer}) holds.
\end{proof}

Lemma \ref{Lem1} shows that the convergents of the continued fraction $\alpha$
defined by (\ref{Int1}) and (\ref{Int2}) are well defined since $q_{n}\geq1$
for all $n\geq0.\smallskip$

The following lemma is proved in \cite{Per} by using continuants and in
\cite{Offer} without using them. We give here a new proof, also without using continuants.

\begin{lemma}
\label{LemInf}Let $q_{n}$ be defined by (\ref{Int5}). Then $\lim
_{n\rightarrow\infty}q_{n}=\infty.$
\end{lemma}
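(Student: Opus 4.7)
The plan is to leverage the monotone auxiliary sequence $s_n := q_n + a_{n+1} q_{n-1}$ that is already implicit in Lemma \ref{Lem1}: by (\ref{A1}), $(s_n)_{n \geq 0}$ is non-decreasing with $s_0 = 1$, and (\ref{A2}) is exactly the inequality $q_{n+1} \geq s_n$. Consequently, if $s_n \to \infty$ the conclusion is immediate: $q_{n+1} \geq s_n \to \infty$ forces $q_n \to \infty$. The real work lies in the complementary case where $s_n$ converges to some finite limit $S \geq 1$.

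In that case, telescoping the identity $s_{n+1} - s_n = (b_{n+1} + a_{n+2} - 1) q_n$ (read off from (\ref{A1})) yields the convergent series of nonnegative terms
\[
\sum_{n=0}^{\infty} (b_{n+1} + a_{n+2} - 1) q_n = S - 1.
\]
Since $q_n \geq 1$ by Lemma \ref{Lem1}, the numerical series $\sum_n (b_{n+1} + a_{n+2} - 1)$ is also convergent. But whenever $a_{n+2} = 1$ the summand equals $b_{n+1} \geq 1$, so this can occur only finitely often. Hence there exists an index $N$ with $a_n = -1$ for every $n \geq N$, and then the hypothesis $b_n + a_{n+1} \geq 1$ from (\ref{Int2}) forces $b_n \geq 2$ for every $n \geq N - 1$.

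From this point the argument is a shifted reprise of Lemma \ref{LemNCF}. For $n \geq N - 1$ the recurrence reads $q_{n+1} - q_n = (b_{n+1} - 1) q_n - q_{n-1} \geq q_n - q_{n-1}$, so the gaps are non-decreasing; moreover Lemma \ref{Lem1} applied at index $N - 1$ gives $q_{N-1} - q_{N-2} = q_{N-1} + a_N q_{N-2} \geq 1$. An easy induction then yields $q_n - q_{n-1} \geq 1$ for every $n \geq N - 1$, and in particular $q_n \to \infty$. The main obstacle is the bounded-$s_n$ scenario: \emph{a priori} saturation of $s_n$ leaves the long-term behaviour of $q_n$ unclear, and the key rescue is the observation that boundedness of $s_n$ is incompatible with having $a_{n+2} = 1$ infinitely often, which collapses the problem back into the negative-continued-fraction setting already handled by Lemma \ref{LemNCF}.
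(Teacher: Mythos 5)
Your proof is correct, but it takes a genuinely different route from the paper. The paper argues by contradiction via subsequential limits: assuming $q_{n}$ stays bounded infinitely often, it considers the \emph{least} limit point $C\geq1$ of such bounded subsequences and uses the inequality $q_{n}\geq q_{m}+a_{m+1}q_{m-1}\geq1$ (their formula (\ref{A3})) to manufacture a subsequence $q_{n_{k}-1}$ accumulating at or below $C-2/3$, contradicting minimality. You instead exploit the monotonicity of $s_{n}=q_{n}+a_{n+1}q_{n-1}$ quantitatively: the telescoping identity $s_{n+1}-s_{n}=(b_{n+1}+a_{n+2}-1)q_{n}$ together with $q_{n}\geq1$ shows that if $s_{n}$ stays bounded then $\sum_{n}(b_{n+1}+a_{n+2}-1)$ converges, which forces $a_{n}=-1$ and $b_{n}\geq2$ eventually, and the problem collapses to the negative-continued-fraction computation of Lemma \ref{LemNCF} (with the base case $q_{N-1}-q_{N-2}\geq1$ correctly supplied by Lemma \ref{Lem1}). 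All your steps check out: the identity for $s_{n+1}-s_{n}$ is exactly what (\ref{A1}) records, the comparison test is legitimate since the terms are nonnegative by (\ref{Int2}), and the shifted induction gives $q_{n}-q_{n-1}\geq1$ for $n\geq N-1$, hence linear growth. What your approach buys is a cleaner dichotomy with more structural information (either $s_{n}\to\infty$, or the fraction is eventually a negative continued fraction with partial quotients $\geq2$) and it avoids the extraction of convergent subsequences and the slightly delicate appeal to a least limit point; what the paper's argument buys is brevity, since it never needs to split into cases or revisit Lemma \ref{LemNCF}.
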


\begin{proof}
As observed in \cite[Formula (3)]{Offer}, it is clear by (\ref{A1}) and
(\ref{A2}) that
\begin{equation}
q_{n}\geq q_{m}+a_{m+1}q_{m-1}\geq1\qquad\left(  n>m\geq1\right)  . \label{A3}%
\end{equation}
Assume that there exists $A\geq1$ such that $1\leq q_{n}\leq A$ for infinitely
many $n.$ Then there exists $B\geq1$ and an increasing sequence $n_{k}$ such
that $\lim_{k\rightarrow\infty}q_{n_{k}}=B.$ Let $C\geq1$ be the least real
number having this property. By (\ref{A3}) with $n=n_{k+1}$ and $m=n_{k}$, we
have for all large $k$%
\[
C+\frac{1}{3}\geq C-\frac{1}{3}+a_{n_{k}+1}q_{n_{k}-1}\quad\text{and}\quad
C+\frac{1}{3}+a_{n_{k}+1}q_{n_{k}-1}\geq1.
\]
This yields immediately $a_{n_{k}+1}=-1,$ and therefore $q_{n_{k}-1}\leq
C-2/3$ for all large $k.$ This contradicts the minimality of $C$ and proves
Lemma \ref{LemInf}.
\end{proof}

\begin{lemma}
\label{Lem-x-n,k}Define for $n\geq0$ and $k\geq1$%
\begin{equation}
x_{n,k}:=\frac{a_{n+1}}{b_{n+1}}%
\genfrac{}{}{0pt}{}{{}}{+}%
\frac{a_{n+2}}{b_{n+2}}%
\genfrac{}{}{0pt}{}{{}}{+\cdots+}%
\frac{a_{n+k}}{b_{n+k}}. \label{x-n-k-0}%
\end{equation}
Then for $n\geq0$ and $k\geq1$%
\begin{equation}
0<x_{n,k}\leq1\text{ if }a_{n+1}=1,\qquad-1\leq x_{n,k}<0\text{ if }%
a_{n+1}=-1. \label{x-n-k}%
\end{equation}

\end{lemma}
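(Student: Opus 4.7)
The plan is to proceed by induction on $k$, the length of the finite continued fraction, using the recursion
\[
x_{n,k+1}=\frac{a_{n+1}}{b_{n+1}+x_{n+1,k}},
\]
together with the hypothesis (\ref{Int2}) to control the denominator.

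For the base case $k=1$, one has $x_{n,1}=a_{n+1}/b_{n+1}$; since $b_{n+1}\geq 1$, this number is nonzero and lies in $(0,1]$ or $[-1,0)$ according to the sign of $a_{n+1}$, so (\ref{x-n-k}) holds.

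For the inductive step, I would split on the value of $a_{n+2}$, which by the inductive hypothesis determines the sign of $x_{n+1,k}$. If $a_{n+2}=1$, then $0<x_{n+1,k}\leq 1$ and hence $b_{n+1}+x_{n+1,k}>b_{n+1}\geq 1$; in particular the denominator is positive, so $x_{n,k+1}$ is well defined, has the same sign as $a_{n+1}$, and $|x_{n,k+1}|=1/(b_{n+1}+x_{n+1,k})<1$. If $a_{n+2}=-1$, then $-1\leq x_{n+1,k}<0$, so one needs a lower bound on $b_{n+1}+x_{n+1,k}$; here the hypothesis $b_{n+1}+a_{n+2}\geq 1$ from (\ref{Int2}) gives $b_{n+1}-1\geq 1$ and therefore
\[
b_{n+1}+x_{n+1,k}\;\geq\; b_{n+1}-1\;\geq\; 1.
\]
Again the denominator is positive, $x_{n,k+1}$ has the same sign as $a_{n+1}$, and $|x_{n,k+1}|\leq 1$. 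This closes the induction.

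The only delicate point is ensuring the denominator $b_{n+1}+x_{n+1,k}$ is strictly positive (so $x_{n,k+1}$ is defined) in the subcase $a_{n+2}=-1$; this is exactly where the semi-regularity condition $b_{n+1}+a_{n+2}\geq 1$ from (\ref{Int2}) is essential. Everything else is bookkeeping of signs between the two cases.
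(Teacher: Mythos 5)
Your proof is correct and takes essentially the same route as the paper: induction on $k$ via the recursion $x_{n,k+1}=a_{n+1}/(b_{n+1}+x_{n+1,k})$, with the semi-regularity condition $b_{n+1}+a_{n+2}\geq 1$ used exactly where you use it, to force $b_{n+1}+x_{n+1,k}\geq 1$. The only cosmetic difference is that the paper organizes the case split by whether $b_{n+1}\geq 2$ rather than by the sign of $a_{n+2}$, which is the same dichotomy.
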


\begin{proof}
The proof follows \cite[Proposition 3]{DS} and is by induction on $k.$ If
$k=1,$ then $x_{n,1}=a_{n+1}/b_{n+1}$ and so (\ref{Int2}) implies
(\ref{x-n-k}) for $k=1$ and any $n\geq0.$ Assume that (\ref{x-n-k}) holds for
some $k\geq1$ and any $n\geq0.$ Then by (\ref{x-n-k-0}) we have%
\begin{equation}
x_{n,k+1}=\frac{a_{n+1}}{b_{n+1}+x_{n+1,k}},\label{x-n-k-1}%
\end{equation}
where for any $n\geq0$%
\begin{equation}
0<x_{n+1,k}\leq1\text{ if }a_{n+2}=1,\qquad-1\leq x_{n+1,k}<0\text{ if
}a_{n+2}=-1.\label{x-n-k-2}%
\end{equation}
If $b_{n+1}\geq2,$ then by (\ref{x-n-k-2})%
\begin{equation}
b_{n+1}+x_{n+1,k}\geq1.\label{x-n-k-3}%
\end{equation}
Otherwise, we have $b_{n+1}<2$ and therefore $a_{n+2}=1$ by (\ref{Int2}), and
so we find again (\ref{x-n-k-3}) by the first inequality in (\ref{x-n-k-2}).
Hence (\ref{x-n-k-1}) and (\ref{x-n-k-3}) imply%
\[
0<x_{n,k+1}\leq1\text{ if }a_{n+1}=1,\qquad-1\leq x_{n,k+1}<0\text{ if
}a_{n+1}=-1,
\]
which proves (\ref{x-n-k}) by induction.
\end{proof}

\begin{lemma}
\label{LemYes}Let $x_{n,k}$ be defined by (\ref{x-n-k-0}). Then%
\begin{equation}
\frac{p_{n+k}}{q_{n+k}}=\frac{p_{n}+x_{n,k}p_{n-1}}{q_{n}+x_{n,k}q_{n-1}%
}\qquad\left(  n\geq0,k\geq1\right)  . \label{FormYes}%
\end{equation}

\end{lemma}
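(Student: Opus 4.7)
The plan is to proceed by induction on $k \geq 1$ with $n \geq 0$ arbitrary, exploiting the nested recursion $x_{n,k+1} = a_{n+1}/(b_{n+1}+x_{n+1,k})$ already established as (\ref{x-n-k-1}) in the proof of Lemma \ref{Lem-x-n,k}.

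For the base case $k=1$, note that $x_{n,1} = a_{n+1}/b_{n+1}$. Substituting this into the right-hand side of (\ref{FormYes}) and clearing the common factor $1/b_{n+1}$ from numerator and denominator gives $(b_{n+1}p_n + a_{n+1}p_{n-1})/(b_{n+1}q_n + a_{n+1}q_{n-1})$, which is $p_{n+1}/q_{n+1}$ by the defining recursion (\ref{Int5}). This settles $k=1$ for every $n \geq 0$.

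For the inductive step, I would assume (\ref{FormYes}) for some $k$ at every starting index and apply it with $n$ replaced by $n+1$ to obtain $p_{n+k+1}/q_{n+k+1} = (p_{n+1} + x_{n+1,k}p_n)/(q_{n+1} + x_{n+1,k}q_n)$. Expanding $p_{n+1}$ and $q_{n+1}$ via (\ref{Int5}) rewrites this as $((b_{n+1}+x_{n+1,k})p_n + a_{n+1}p_{n-1})/((b_{n+1}+x_{n+1,k})q_n + a_{n+1}q_{n-1})$. Dividing numerator and denominator by $b_{n+1}+x_{n+1,k}$ and recognizing the resulting ratio $a_{n+1}/(b_{n+1}+x_{n+1,k})$ as $x_{n,k+1}$ delivers (\ref{FormYes}) for $k+1$.

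The only subtlety is the legality of dividing by $b_{n+1}+x_{n+1,k}$, but inequality (\ref{x-n-k-3}) in the proof of Lemma \ref{Lem-x-n,k} already gives $b_{n+1}+x_{n+1,k} \geq 1 > 0$, so this step is safe. I do not anticipate a real obstacle: the lemma is essentially a compact bookkeeping identity reflecting the Möbius-style action of (\ref{Int5}), and the induction packages it cleanly in one line per case.
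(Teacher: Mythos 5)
Your proof is correct and rests on exactly the same algebraic pivot as the paper's, namely the identity $x_{n,k+1}=a_{n+1}/(b_{n+1}+x_{n+1,k})$ of (\ref{x-n-k-1}) together with the recursion (\ref{Int5}); the only difference is organizational, in that you induct on $k$ (base case $k=1$) while the paper inducts on $n$ (base case $n=0$, which is just (\ref{Int6})), the inductive step being the same chain of equalities read in the opposite direction. Your remark that $b_{n+1}+x_{n+1,k}\geq 1$ justifies the division is a correct and welcome point of care; there is no gap.
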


\begin{proof}
By induction on $n.$ Clearly (\ref{FormYes}) is true for $n=0$ and every
$k\geq1$ since%
\[
\frac{p_{0}+x_{0,k}p_{-1}}{q_{0}+x_{0,k}q_{-1}}=b_{0}+\frac{a_{1}}{b_{1}}%
\genfrac{}{}{0pt}{}{{}}{+}%
\frac{a_{2}}{b_{2}}%
\genfrac{}{}{0pt}{}{{}}{+\cdots+}%
\frac{a_{k}}{b_{k}}=\frac{p_{k}}{q_{k}}.
\]
Assume that (\ref{FormYes}) is true for a given $n\geq0$ and all $k\geq1.$ By
(\ref{x-n-k-1}), we see that%
\[
\frac{p_{n+1+k}}{q_{n+1+k}}=\frac{p_{n+k+1}}{q_{n+k+1}}=\frac{p_{n}%
+x_{n,k+1}p_{n-1}}{q_{n}+x_{n,k+1}q_{n-1}}=\frac{p_{n}\left(  b_{n+1}%
+x_{n+1,k}\right)  +a_{n+1}p_{n-1}}{q_{n}\left(  b_{n+1}+x_{n+1,k}\right)
+a_{n+1}q_{n-1}},
\]
which proves (\ref{FormYes}) by induction.\smallskip
\end{proof}

From Lemmas \ref{LemYes} and \ref{Lem-x-n,k} we deduce immediately that%
\begin{equation}
\left\vert \frac{p_{n+k}}{q_{n+k}}-\frac{p_{n}}{q_{n}}\right\vert \leq\frac
{1}{q_{n}\left\vert q_{n}+x_{n,k}q_{n-1}\right\vert }\qquad\left(
n\geq0,k\geq1\right)  . \label{FormYoup}%
\end{equation}
Now we prove Tietze Theorem. By Lemma \ref{LemNCF}, we can assume that there
exist infinitely many $m$ such that $a_{m+1}=1.$ For every large $n,$ let
$m=m(n)$ be the greatest integer less than $n$ such that $a_{m+1}=1.$ Clearly
$m$ tends to infinity with $n.$ For every large $n$ and every $k\geq1,$ we
have by (\ref{FormYoup})%
\begin{align*}
\left\vert \frac{p_{n+k}}{q_{n+k}}-\frac{p_{n}}{q_{n}}\right\vert  &
\leq\left\vert \frac{p_{n+k}}{q_{n+k}}-\frac{p_{m}}{q_{m}}\right\vert
+\left\vert \frac{p_{n}}{q_{n}}-\frac{p_{m}}{q_{m}}\right\vert \\
&  \leq\frac{1}{q_{m}\left\vert q_{m}+x_{m,n+k-m}q_{m-1}\right\vert }+\frac
{1}{q_{m}\left\vert q_{m}+x_{m,n-m}q_{m-1}\right\vert }.
\end{align*}
However $x_{m,n+k-m}>0$ and $x_{m,n-m}>0$ by Lemma \ref{Lem-x-n,k}. Hence%
\[
\left\vert \frac{p_{n+k}}{q_{n+k}}-\frac{p_{n}}{q_{n}}\right\vert \leq\frac
{2}{q_{m\left(  n\right)  }^{2}}%
\]
for all large $n$ and all $k\geq1.$ As $\lim_{n\rightarrow\infty}q_{m\left(
n\right)  }=\infty$ by Lemma \ref{LemInf}, $p_{n}/q_{n}$ is a Cauchy sequence,
which proves Tietze Theorem.

\end{document}